\documentclass[11pt]{amsart}
\usepackage{amsmath,amsfonts,amssymb,amsthm, a4wide, url, mathscinet,mathdots}
\usepackage{hyperref}
\usepackage[usenames]{color}

\bibliographystyle{plain}
\newtheorem{theorem}{Theorem}

\begin{document}
\title{A probabilistic variant of Sperner's theorem and of maximal $r$-cover free families}
\author{Noga Alon}
\address{Princeton University, NJ 08544, USA and Tel Aviv University, Tel Aviv 69978, Israel}
\author{Shoni Gilboa}
\address{The Open University of Israel, Raanana 43107, Israel.}
\author{Shay Gueron}
\address{University of Haifa, Haifa 31905, Israel, and Amazon Web Services, USA. }

\begin{abstract}
A family of sets is called $r$-\emph{cover free} if no set in the family is contained in the union of $r$ (or less) other sets in the family. A $1$-cover free family is simply an antichain with respect to set inclusion. Thus, Sperner's classical result determines the maximal cardinality of a $1$-cover free family of subsets of an $n$-element set. Estimating the maximal cardinality of an $r$-cover free family of subsets of an $n$-element set for $r>1$ was also studied.
In this note we are interested in the following probabilistic variant of this problem. Let $S_0,S_1,\ldots, S_r$ be independent and identically distributed random subsets of an $n$-element set. Which distribution minimizes the probability that $S_0\subseteq {\bigcup_{i=1}^r S_i}$? A natural candidate is the uniform distribution on an $r$-cover-free family of maximal cardinality. We show that for $r=1$ such distribution is indeed best possible. 
In a complete contrast, we also show that this is far from being true for every $r>1$ and $n$ large enough. 
\end{abstract}

\keywords{cover free families, Sperner's theorem}
\subjclass{60C05}

\maketitle

\section{Introduction}\label{sec:intro}
For every positive integer $n$, let $\Omega_n$ be the set of all subsets of some fixed $n$-element set. 
For a positive integer $r$, a family $\mathcal{F}\subseteq \Omega_n$ is called $r$-\emph{cover free} if no set in $\mathcal{F}$ is contained in the union of $r$ (or less) other sets in $\mathcal{F}$. Let us denote by $g_r(n)$ the maximal cardinality of an $r$-cover free family in $\Omega_n$.
A $1$-cover free family in $\Omega_n$ is just an antichain in $\Omega_n$, with respect to set inclusion. Hence $g_1(n)=\binom{n}{\lfloor n/2\rfloor}$, by the classical result of Sperner (\cite{Sperner}).
For $r=2$  it was shown in \cite{EFF1} that 
$1.134^n<g_2(n)<O(\sqrt n) \left(\frac{5}{4}\right)^n$ and in the subsequent paper \cite{EFF2}, the same authors showed that for every $r$, 
\begin{equation}\label{eq:g_r+}\left(1+\frac{1}{4r^2}\right)^n<g_r(n)\leq\sum_{k=1}^n\frac{\binom{n}{\lceil k/r\rceil}}{\binom{k-1}{\lceil k/r\rceil-1}}.
\end{equation}
A different upper bound, which is better for large $r$, was obtained in \cite{DR}. In \cite{Ruszinko}, this bound was given a simpler proof and the following, more explicit, form: for every $r\geq 2$ and $n$ large enough,
\begin{equation}\label{eq:g_r large}g_r(n)\leq r^{8n/r^2}.
\end{equation}
We will now describe a probabilistic variant of $r$-cover free families of maximal cardinality. 
Let $\mathcal{P}_n:=\{p:\Omega_n\to[0,\infty):\sum_{A\in\Omega_n}p(A)=1\}$ be the family of probability distributions on $\Omega_n$.
For a positive integer $r$ and $p\in\mathcal{P}_n$, let $\tau_r(p)$ be the probability that $S_0\subseteq {\bigcup_{i=1}^r S_i}$, where $S_0,S_1,\ldots,S_r$ are random sets, drawn independently from $\Omega_n$ according to the distribution $p$.
Natural candidates to minimize $\tau_r$ are distributions in the set $\mathcal{CF}_{n,r}:=\{p\in\mathcal{P}_n: p \text{ is supported on an } r\text{-cover free family}\}$ 
(in which case, one only has to worry about choosing the same set twice). 

Clearly, $\min_{p\in\mathcal{CF}_{n,1}}\tau_1(p)=\frac{1}{\binom{n}{\lfloor n/2\rfloor}}$ where the minimum is attained for any distribution which is uniformly supported on a maximal antichain in $\Omega_n$.
Our first result is that for $n\geq 2$ this is indeed the minimum of $\tau_1$ over all $\mathcal{P}_n$.
\begin{theorem}\label{thm:r=1}
Suppose that $n\geq 2$. 
Then $\tau_1(p)\geq \frac{1}{\binom{n}{\lfloor n/2\rfloor}}$ for every $p\in\mathcal{P}_n$ and consequently, $\min_{p\in\mathcal{P}_n}\tau_1(p)=\min_{p\in\mathcal{CF}_{n,1}}\tau_1(p)$.
\end{theorem}
We note that the weaker statement that $\Pr(S_0\subseteq S_1 \text{ or } S_0\supseteq S_1)\geq\frac{1}{\binom{n}{\lfloor n/2\rfloor}}$ for every independent identically distributed random sets $S_0, S_1$ in $\Omega_n$, readily follows from the fact that $\Omega_n$ may be covered by $\binom{n}{\lfloor n/2\rfloor}$ chains (with respect to set inclusion). This symmetric version of Theorem \ref{thm:r=1} may be generalized as follows. 
For a property $P$ of families of sets, let $ex(n,P)$ denote the maximum possible cardinality of a family of sets in $\Omega_n$ satisfying $P$ and let $ex(n,k,P)$, for $0\leq k\leq n$, denote the maximum possible cardinality of a family of $k$-element sets in $\Omega_n$ satisfying $P$. 
Thus, for example, if $P_1$ is the property of being an antichain then $ex(n,P_1)={n \choose {\lfloor n/2 \rfloor}}$ by Sperner's Theorem,
if $P_2$ is the property of being an intersecting family and $n \geq 2k$ then
$ex(n,k,P_2)={{n-1} \choose {k-1}}$ by the Erd\H{o}s-Ko-Rado Theorem \cite{EKR},
and if $P_3$ is the property of not containing two sets whose symmetric difference has cardinality smaller than $d$, 
then $ex(n,P_3)$ is the maximum possible cardinality of an error correcting code with length $n$ and minimum distance $d$. 
Similarly, $ex(n,k,P)$ is the maximum cardinality of the corresponding constant weight code.

\begin{theorem}
\label{t11}
Let ${\mathcal H}$ be a family of unordered pairs of distinct sets in $\Omega_n$ and let $P_{\mathcal H}$ be the property of containing no pair from ${\mathcal H}$.
For $p\in\mathcal{P}_n$, let $\tau_{\mathcal H}(p):=\Pr(\{S_0,S_1\}\in{\mathcal H} \text{ or } S_0=S_1)$, where $S_0,S_1$ are random sets, drawn independently from $\Omega_n$ according to the distribution $p$.
Then $\min_{p\in \mathcal{P}_n}\tau_{\mathcal H}(p)=\frac{1}{ex(n,P_{\mathcal H})}$. 
Similarly, for every $0\leq k\leq n$, the minimum of $\tau_{\mathcal H}(p)$ over distributions $\mathcal{P}_n$ whose support is a subset of $\{A\in\Omega_n:|A|=k\}$ is $\frac{1}{ex(n,k, P_{\mathcal H})}$.
\end{theorem}
The examples mentioned above provide several specific applications of the theorem, and it is not difficult to describe others.

\medskip
In a complete contrast to Theorem \ref{thm:r=1}, we  show that for every $r>1$ (and $n$ large enough), the minimum of $\tau_r$ on $\mathcal{P}_n$ is much smaller than the minimum of $\tau_r$ over $\mathcal{CF}_{n,r}$. 
For every $0\leq\ell\leq n$, let $p_{\ell}$ be the probability distribution in $\mathcal{P}_n$ uniformly supported on the family of all $\ell$-element sets in $\Omega_n$.
\begin{theorem}\label{thm:r>1} 
Suppose that $r\geq 2$. There is $0<\mu_r<1$ such that for every $n$ large enough, 
$\min_{0<\ell<\frac{n}{r}}\tau_r(p_{\ell})<\mu_r^n\min_{p\in\mathcal{CF}_{n,r}}\tau_r(p)$
and consequently,
$\min_{p\in\mathcal{P}_n}\tau_r(p)<\mu_r^n\min_{p\in\mathcal{CF}_{n,r}}\tau_r(p)$.
\end{theorem}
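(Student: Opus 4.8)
The plan is to sandwich the two quantities by their exponential rates and exhibit a strict gap.

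First I would bound $\inf_{p\in\mathcal{CF}_{n,r}}\tau_r(p)$ from below. If $p\in\mathcal{CF}_{n,r}$ has support $\mathcal{F}$, then because $\mathcal{F}$ is $r$-cover free the event $S_0\subseteq S_1\cup\cdots\cup S_r$ with all $S_i\in\mathcal{F}$ can occur only through a collision: writing the distinct values among $S_1,\ldots,S_r$ as $B_1,\ldots,B_m$ (so $m\le r$), if $S_0\notin\{B_1,\ldots,B_m\}$ then $S_0,B_1,\ldots,B_m$ are distinct members of $\mathcal{F}$ with $S_0\subseteq B_1\cup\cdots\cup B_m$, contradicting $r$-cover freeness. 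Hence $\tau_r(p)=\Pr(S_0\in\{S_1,\ldots,S_r\})\ge\Pr(S_0=S_1)=\sum_{A}p(A)^2\ge 1/|\mathcal{F}|\ge 1/g_r(n)$, the penultimate step by Cauchy--Schwarz. Combined with the known upper bounds on $g_r(n)$ in \eqref{eq:g_2}, \eqref{eq:g_r+}, \eqref{eq:g_r large}, this gives $\inf_{p\in\mathcal{CF}_{n,r}}\tau_r(p)\ge B_r^{-n}$ for an explicit base $B_r$ (e.g.\ $B_r=r^{8/r^2}$ from \eqref{eq:g_r large} for large $r$, or the base coming from \eqref{eq:g_2} for $r=2$).

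Second I would estimate $\tau_r(p_\ell)$. By symmetry I may fix $S_0$, so with $U:=S_1\cup\cdots\cup S_r$ and $\alpha:=\ell/n$,
$$\tau_r(p_\ell)=\Pr\big(S_0\subseteq U\big)=\frac{\mathbb{E}\binom{|U|}{\ell}}{\binom n\ell}\le\mathbb{E}\Big[\Big(\tfrac{|U|}{n}\Big)^{\ell}\Big],$$
using $\binom u\ell/\binom n\ell=\prod_{j=0}^{\ell-1}\frac{u-j}{n-j}\le(u/n)^\ell$. Since each element lies outside $U$ with probability $(1-\alpha)^r$ independently across the $r$ sets, $|U|$ concentrates around $\bar u=n\big(1-(1-\alpha)^r\big)$, and for $\alpha<1/r$ one has $\bar u/n=1-(1-\alpha)^r<r\alpha<1$. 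Writing $\beta=|U|/n$, I would estimate $\mathbb{E}[\beta^{\ell}]$ by the standard large-deviation trade-off: splitting on $\{|U|\le\beta^*n\}$ and its complement gives $\tau_r(p_\ell)\le(\beta^*)^{\ell}+\Pr(|U|>\beta^*n)$, and optimizing $\beta^*$ yields $\tau_r(p_\ell)\le 2^{-(c(\alpha,r)+o(1))n}$ with
$$c(\alpha,r)=\min_{\beta}\Big[I(\beta)+\alpha\log_2\tfrac1\beta\Big],$$
where $I(\beta)$ is the large-deviation rate function of $|U|/n$, computed from the distribution of the number of uncovered elements.

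Third---and this is the crux---I would show that $\alpha\in(\tfrac1{4r},\tfrac1r)$ can be chosen so that $c(\alpha,r)>\log_2 B_r$ for every $r\ge 2$. The saddle point defining $c(\alpha,r)$ sits strictly above the mean $\bar u$, because raising $\beta$ enlarges $(u/n)^\ell$, so the estimate must genuinely use the rate $I(\beta)$ for $\beta>\bar u/n$ rather than the trivial bound $|U|\le r\ell$. Indeed the crude bound $c\approx\alpha\log_2\frac1{r\alpha}$ already suffices against \eqref{eq:g_r large} once $r$ is large, but it fails for small $r$, where one must exploit that $|U|$ concentrates well below $r\ell$ and must invoke the sharper bounds \eqref{eq:g_2}, \eqref{eq:g_r+} on $g_r(n)$. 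I expect the main difficulty to be exactly this comparison of two explicit rate functions and the verification that $c(\alpha,r)-\log_2 B_r>0$ uniformly, the finitely many small values of $r$ being the delicate cases; the negative correlation of the coverage indicators within each $S_i$, which only increases $I$, should provide a little slack.

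Finally, once $c(\alpha,r)>\log_2 B_r$ is secured, I would pick any $\eta_r\in(2^{\log_2 B_r-c(\alpha,r)},1)$ and $N_r$ large enough to absorb the $o(n)$ error and the polynomial factor in the $g_r(n)$ bound; then for $n\ge N_r$, $\min_{\frac{n}{4r}<\ell<\frac{n}{r}}\tau_r(p_\ell)\le\tau_r(p_{\lfloor\alpha n\rfloor})<\eta_r^n B_r^{-n}\le\eta_r^n\inf_{p\in\mathcal{CF}_{n,r}}\tau_r(p)$. The concluding inequality is then immediate, since every $p_\ell$ lies in $\mathcal{P}_n$, so that $\inf_{p\in\mathcal{P}_n}\tau_r(p)\le\min_{\ell}\tau_r(p_\ell)$.
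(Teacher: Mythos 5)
There is a genuine gap, and it sits exactly where you place the ``crux.'' Your first two steps are sound: your collision argument giving $\tau_r(p)=\Pr(S_0\in\{S_1,\ldots,S_r\})\ge\sum_A p(A)^2\ge 1/|\mathcal{F}|\ge 1/g_r(n)$ is in fact a slicker route to the paper's Lemma \ref{lem:taug}, and the identity $\tau_r(p_\ell)=\mathbb{E}\binom{|U|}{\ell}/\binom{n}{\ell}$ with $U=S_1\cup\cdots\cup S_r$ is the right starting point. But the whole content of the theorem is the strict rate inequality $c(\alpha,r)>\log_2 B_r$, and you do not prove it; you only state that you ``expect'' it can be verified, the small $r$ being ``the delicate cases.'' For large $r$ this is indeed easy against \eqref{eq:g_r large} (that is precisely the paper's Proposition \ref{prop:r>100}, which covers $r\ge 101$), but for the remaining $r$ the only available bound is \eqref{eq:g_r+}, and its exponential rate $\log_2 B_r$ is not an explicit number: extracting it means analyzing $\max_{\ell}\binom{n}{\ell+1}/\binom{r\ell}{\ell}$, while your $c(\alpha,r)$ involves a rate function $I(\beta)$ for the union of $r$ dependent random $\ell$-sets that you never compute. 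So for each of these $r$ the ``verification'' is a comparison of two optimization problems, neither of which your proposal makes explicit, and no mechanism is offered for carrying it out. As written, the hardest part of the proof is missing.

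What the paper does instead --- and this is the idea your plan lacks --- removes the need for any rate comparison. Take $\ell$ to be the \emph{maximizer} of $\binom{n}{\ell+1}/\binom{r\ell}{\ell}$ over $0\le\ell<\frac{n}{r}$ (Lemma \ref{lem:max_ell} shows this maximizer satisfies $\ell>\frac{n}{4r}$). Then the bound \eqref{eq:g_r+} collapses to $g_r(n)\le n\binom{n}{\ell+1}/\binom{r\ell}{\ell}$, so the ``trivial'' estimate $\binom{r\ell}{\ell}/\binom{n}{\ell}$ for $\tau_r(p_\ell)$ (the one you would get by pretending $|U|=r\ell$ always) is automatically within a factor $4rn$ of $1/g_r(n)$ --- whatever the true, unknown rate of $g_r(n)$ may be. The exponential factor $\eta_r^n$ then comes from beating the trivial estimate for this particular $\ell$: with probability $1-e^{-\Omega(n)}$ one has $|U|\le r\ell-t$ with $t\approx n/(16r^2)$ (Lemmas \ref{lem:union} and \ref{lem:!comp1}, using $\Pr(|U|>r\ell-t)\le\Pr(|S_1\cup S_2|>2\ell-t)$), and on that event the covering probability drops by the factor $\binom{r\ell-t}{\ell}/\binom{r\ell}{\ell}\le 2\sqrt{r\ell}\,\theta_r^{\ell}$ (Lemma \ref{lem:!comp2}). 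In your language: choose $\alpha=\alpha^*$, the maximizer of the rate of \eqref{eq:g_r+}; then $\log_2 B_r$ \emph{equals} the rate of the trivial bound at $\alpha^*$, and $c(\alpha^*,r)>\log_2 B_r$ becomes exactly the statement that the union is typically strictly smaller than $r\ell$ --- no case-by-case comparison of explicit rate functions is needed. Without this choice (or without actually computing and comparing both rates for every $2\le r\le 100$), your outline does not close.
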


For every $r\geq 2$, Theorem \ref{thm:r>1} shows that $\min_{p\in{\mathcal P}_n}\tau_r(p)$ is (much) smaller than $\min_{p\in\mathcal{CF}_{n,r}}\tau_r(p)$, which is at most 
$1-\left(1-\frac{1}{g_r(n)}\right)^r<\frac{r}{g_r(n)}$, as shown 
by considering any probability distribution uniformly supported on an $r$-cover free family of maximal cardinality. A lower bound for $\min_{p\in{\mathcal P}_n}\tau_r(p)$ is given in the following theorem.
\begin{theorem}\label{thm:lower}
Suppose that $r\geq 2$. There is $C_r>0$ such that $\min_{p\in\mathcal{P}_n}\tau_r(p)\geq\frac{C_r}{(g_r(n))^r}$ and hence, for $n$ large enough, by \eqref{eq:g_r large}, $\min_{p\in\mathcal{P}_n}\tau_r(p)\geq\frac{C_r}{r^{8n/r}}$.
\end{theorem}

We prove Theorems \ref{thm:r=1} and \ref{t11} in Section \ref{sec:r=1} 
and Theorems \ref{thm:r>1} and \ref{thm:lower} in Section \ref{sec:r>1}.

\section{The case $r=1$}\label{sec:r=1}
\begin{proof}[Proof of Theorem \ref{thm:r=1}]
Let $p\in\mathcal{P}_n$. 
Let $\mathcal{C}$ be the set of all maximal chains in $\Omega_n$, with respect to set inclusion. 
Every $A\in\Omega_n$ belongs to exactly $\frac{|\mathcal{C}|}{\binom{n}{|A|}}$ maximal chains. Therefore, 
$\frac{1}{|\mathcal{C}|}\sum_{C\in\mathcal{C}}\sum_{A\in C}\binom{n}{|A|}p(A)=\sum_{A\in \Omega_n}p(A)=1$
and since $\binom{n}{k}\leq \binom{n}{\lfloor n/2\rfloor}$ for every $0\leq k\leq n$,
\begin{equation}
\binom{n}{\lfloor \frac{n}{2}\rfloor}\sum_{A\in\Omega_n}p(A)^2\geq\sum_{A\in\Omega_n}\binom{n}{|A|}p(A)^2=\frac{1}{|\mathcal{C}|}\sum_{C\in\mathcal{C}}\sum_{A\in C}\binom{n}{|A|}^2 p(A)^2.\label{eq:A}
\end{equation}
Similarly, every pair $A_0\subsetneq A_1$ of sets in $\Omega_n$ belong to exactly $\frac{|\mathcal{C}|}{\binom{n}{|A_1|}\binom{|A_1|}{|A_0|}}$ maximal chains. Therefore, since $\frac{\binom{n}{k}}{\binom{\ell}{k}}\leq \frac{1}{2}\binom{n}{\lfloor \frac{n}{2}\rfloor}$ for every $0\leq k<\ell\leq n$,
\begin{align}
\binom{n}{\lfloor \frac{n}{2}\rfloor}\sum_{\substack{(A_0,A_1)\in\Omega_n^2\\ A_0\subsetneq A_1}}p(A_0)p(A_1)&\geq 2\sum_{\substack{(A_0,A_1)\in\Omega_n^2\\ A_0\subsetneq A_1}}\frac{\binom{n}{|A_0|}}{\binom{|A_1|}{|A_0|}}p(A_0)p(A_1)\nonumber\\
&=\frac{1}{|\mathcal{C}|}\sum_{C\in\mathcal{C}}\sum_{\substack{(A_0,A_1)\in C^2\\ A_0\neq A_1}}\binom{n}{|A_0|}\binom{n}{|A_1|}p(A_0)p(A_1).\label{eq:AA}
\end{align}
Summing up \eqref{eq:A} and \eqref{eq:AA} yields 
\begin{align*}
\binom{n}{\lfloor \frac{n}{2}\rfloor}\tau_1(p)&=\binom{n}{\lfloor \frac{n}{2}\rfloor}\sum_{\substack{(A_0,A_1)\in\Omega_n^2\\ A_0\subseteq A_1}}p(A_0)p(A_1)\geq\frac{1}{|\mathcal{C}|}\sum_{C\in\mathcal{C}}\sum_{(A_0,A_1)\in C^2}\binom{n}{|A_0|}\binom{n}{|A_1|}p(A_0)p(A_1)\\
&=\frac{1}{|\mathcal{C}|}\sum_{C\in\mathcal{C}}\left(\sum_{A\in C}\binom{n}{|A|}p(A)\right)^2\geq\left(\frac{1}{|\mathcal{C}|}\sum_{C\in\mathcal{C}}\sum_{A\in C}\binom{n}{|A|}p(A)\right)^2=1.
\end{align*}
as claimed.
\end{proof}

\begin{proof}[Proof of Theorem \ref{t11}] 
Let $G$ be the complement of the graph $(\Omega_n,\mathcal H)$. 
The size of the maximum clique in $G$ is clearly $ex(n,P_{\mathcal H})$. Therefore, by a theorem of Motzkin and Straus \cite[Theorem 1]{MS},
$$\min_{p\in\mathcal{P}_n}\tau_{\mathcal H}(p_{\mathcal H}) =1-2\max_{p\in\mathcal{P}_n}\sum_{\{A_0,A_1\} \text{ is an edge of }G}p(A_0)p(A_1)=\frac{1}{ex(n,P_{\mathcal H})}.$$
The second statement follows similarly, by considering the graph induced by $G$ on the vertex set $\{A\in\Omega_n:|A|=k\}$.
\end{proof}

\section{The case $r>1$}\label{sec:r>1}
Note that if $p\in\mathcal{P}_n$ is supported on an $r$-cover free family $\mathcal{F}$, then 
$$
1-\tau_r(p)=\sum_{F\in\mathcal{F}}p(F)\left(1-p(F)\right)^r\leq \sum_{F\in\mathcal{F}}p(F)\left(1-p(F)\right)\leq 1-\frac{1}{|\mathcal{F}|},
$$
and hence $\min_{p\in\mathcal{CF}_{n,r}}\tau_r(p)\geq\frac{1}{g_r(n)}$.
Therefore, to prove Theorem \ref{thm:r>1} for some $r\geq 2$, it is enough to show that there is $0<\mu_r<1$ such that for $n$ large enough,
\begin{equation}\label{eq:enough}
\min_{0<\ell<\frac{n}{r}}\tau_r(p_{\ell})<\mu_r^n\frac{1}{g_r(n)}.
\end{equation}

For large $r$ this may be easily deduced as follows.
For $\ell:=\lfloor \frac{n}{e r}\rfloor$, clearly
\begin{equation*}
\tau_r(p_{\ell})\leq\frac{\binom{r\ell}{\ell}}{\binom{n}{\ell}}\leq\left(\frac{r\ell}{n}\right)^{\ell}\leq \frac{1}{e^{\ell}}< e\frac{1}{e^{\frac{n}{e r}}}=e\left(e^{-\frac{1}{e}}r^{\frac{8}{r}}\right)^{\frac{n}{r}}\frac{1}{r^{\frac{8n}{r^2}}}.
\end{equation*}
Therefore, by \eqref{eq:g_r large}, for $n$ large enough
\begin{equation}\label{eq:r101}
\min_{0<\ell<\frac{n}{r}}\tau_r(p_{\ell})<e\left(e^{-\frac{1}{e}}r^{\frac{8}{r}}\right)^{\frac{n}{r}}\frac{1}{r^{\frac{8n}{r^2}}}<e\left(e^{-\frac{1}{e}}r^{\frac{8}{r}}\right)^{\frac{n}{r}}\frac{1}{g_r(n)}.
\end{equation}
It can be verified that $e^{-\frac{1}{e}}r^{\frac{8}{r}}<1$ for every $r\geq 101$. Thus, \eqref{eq:r101} confirms \eqref{eq:enough}, and hence Theorem \ref{thm:r>1}, for $r\geq 101$.
We proceed to describe the proof Theorem \ref{thm:r>1} for general $r\geq 2$.

\begin{proof}[Proof of Theorem \ref{thm:r>1}]
Let $\ell$ be an integer in the interval $[0,\frac{n}{r})$ for which $\binom{n}{\ell+1}/\binom{r\ell}{\ell}$ is maximal. 
It is simple to verify that if $n$ is large enough, then the sequence $\left(\binom{n}{j+1}/\binom{rj}{j}\right)_{j=0}^{\lfloor n/4r\rfloor+1}$ is increasing and hence $\ell>\frac{n}{4r}$.

Let $S_0,S_1,\ldots,S_r$ be random sets chosen, independently and uniformly, from all the $\ell$-element sets in $\Omega_n$.

Let $t:=\lfloor\ell^2/n\rfloor$ and let $\mathcal{E}$ be the event: $|{\bigcup_{i=1}^r S_i}|> r\ell-{t}$. 
It is easy to verify that the sequence $(\Pr(S_1\cup S_2=k))_{k=2\ell-t}^{2\ell}$ is decreasing, and hence
$$\Pr(\mathcal{E})\leq\Pr(|S_1\cup S_2|> 2\ell-t)\leq t\Pr(|S_1\cup S_2|=2\ell-t)=t\frac{\binom{n-\ell}{\ell-t}\binom{\ell}{t}}{\binom{n}{\ell}}.$$
Therefore, by \eqref{eq:g_r+},
\begin{align*}
\tau_r\left(p_{\ell}\right)&=\Pr\left(S_0\subseteq {\bigcup_{i=1}^r S_i}\right)\\
&=\Pr\left(\mathcal{E}\right)\Pr\left(S_0\subseteq {\bigcup_{i=1}^r S_i}~\bigg |~\mathcal{E}\right)+\Pr\left(\Omega_n\setminus \mathcal{E}\right)\Pr\left(S_0\subseteq {\bigcup_{i=1}^r S_i}~\bigg |~ \Omega_n\setminus \mathcal{E}\right) \\
&\leq t\frac{\binom{n-\ell}{\ell-t}\binom{\ell}{t}}{\binom{n}{\ell}}\cdot\frac{\binom{r\ell}{\ell}}{\binom{n}{\ell}}+1\cdot\frac{\binom{r\ell-t}{\ell}}{\binom{n}{\ell}}
=\left(t\frac{\binom{n-\ell}{\ell-t}\binom{\ell}{t}}{\binom{n}{\ell}}+\frac{\binom{r\ell-t}{\ell}}{\binom{r\ell}{\ell}}\right)\frac{n-\ell}{\ell+1}\cdot\frac{\binom{r\ell}{\ell}}{\binom{n}{\ell+1}}\\
&\leq \left(t\frac{\binom{n-\ell}{\ell-t}\binom{\ell}{t}}{\binom{n}{\ell}}+\frac{\binom{r\ell-t}{\ell}}{\binom{r\ell}{\ell}}\right)\frac{(n-\ell)n}{\ell+1}\cdot\frac{1}{g_r\left(n\right)},
\end{align*}
and \eqref{eq:enough} follows by using standard estimates on binomial coefficients. This completes the proof of the theorem.
\end{proof}

Finally, we prove Theorem \ref{thm:lower}.

\begin{proof}[Proof of Theorem \ref{thm:lower}]
Let $p\in\mathcal{P}_n$, let $N:=2g_r(n)$, let $S_1,\ldots,S_N$ be random sets, drawn independently from $\Omega_n$ according to the distribution $p$, and consider the random variable
$$I:=\{i\in [N]:\text{there is } J\subset[N]\setminus\{i\} \text{ of cardinality } r \text{ such that }S_i\subseteq\bigcup_{j\in J}S_j\}.$$
The family $\{S_i\}_{i\in [N]\setminus I}$ is clearly $r$-cover free, therefore $N-|I|=|[N]\setminus I|\leq g_r(n)$ and hence ${\mathbb E}|I|\geq N-g_r(n)=g_r(n)$.
On the other hand, clearly ${\mathbb E}|I|\leq N\binom{N-1}{r}\tau_r(p)$. Hence
\begin{equation*}
\tau_r(p)\geq\frac{g_r(n)}{N\binom{N-1}{r}}\geq\frac{r!\,g_r(n)}{N^{r+1}}=\frac{r!}{2^{r+1}g_r(n)^r}
\end{equation*}
and the result follows.
\end{proof}

\subsection*{Acknowledgements}
This research was partially supported by 
NSF grant DMS-1855464,
BSF grant 2018267,
the Simons Foundation,
a Google Research Award, 
NSF-BSF Grant 2018640,
the Israel Science Foundation (grant No. 1018/16), 
and 
the Center for Cyber Law and Policy at the University of Haifa, 
in conjunction with the Israel National Cyber Bureau in the Prime Minister's Office.


\begin{thebibliography}{99}
\bibitem{DR}
A. G. Dyachkov\ and\ V. V. Rykov, Bounds on the length of disjunctive codes, Problemy
Peredachi Informatsii 18, No. 3 (1982), 7-13.
\bibitem{EFF1} 
P. Erd\H os, P. Frankl\ and\ Z. F\" uredi, Families of finite sets in which no set is covered by the union of two others, J. Combin. Theory Ser. A {\bf 33} (1982), no.~2, 158--166.
\bibitem{EFF2} 
P. Erd\H os, P. Frankl\ and\ Z. F\" uredi, Families of finite sets in which no set is covered by the union of $r$ others, Israel J. Math. {\bf 51} (1985), no.~1--2, 79--89.
\bibitem{EKR}
P. Erd\H{o}s, C. Ko\ and\ R. Rado, Intersection theorems for systems of finite sets, Quart. J. Math. Oxford Ser. (2) {\bf 12} (1961), 313--320. 
\bibitem{MS}
T. S. Motzkin and E. G. Straus,
Maxima for graphs and a new proof of a theorem of Tur\'an,
Canadian J. Math. 17 (1965), 533--540.
\bibitem{Ruszinko}
M. Ruszink\'o, On the upper bound of the size of the $r$-cover-free families, J. Combin. Theory Ser. A {\bf 66} (1994), no.~2, 302--310.
\bibitem{Sperner}
E. Sperner, Ein Satz \"uber Untermengen einer endlichen Menge, Math. Z. {\bf 27} (1928), no.~1, 544--548.
\end{thebibliography}
\end{document}